\newtheorem{thm}{Theorem}[section]
\newtheorem{prp}[thm]{Proposition}
\newtheorem{lem}[thm]{Lemma}
\newcommand{ \bm}[1]{\boldsymbol{#1}}
\newcommand{ \Map}{{\rm Map}\,}
\newcommand{ \SU}{S\!U}
\begin{document}
\title{A note on homotopy types of connected components of $\Map(S^4,B\SU(2))$}
\author{Mitsunobu Tsutaya}
\thanks{E-mail address: tsutaya@math.kyoto-u.ac.jp}
\date{}
\maketitle

\section{Introduction}
By \cite{Got72}, connected components of $\Map(S^4,B\SU(2))$ is the classifying spaces of gauge groups of principal $\SU(2)$-bundles over $S^4$.
Tsukuda \cite{Tsu01} has investigated the homotopy types of connected components of $\Map(S^4,B\SU(2))$.
But unfortunately, the proof of Lemma 2.4 in \cite{Tsu01} is not correct for $p=2$.
In this paper, we give a complete proof.
Moreover, we investigate the further divisibility of $\epsilon_i$ defined in \cite{Tsu01}.
In \cite{Tsu}, it is shown that divisibility of $\epsilon_i$ have some information about $A_i$-equivalence types of the gauge groups.
\par
In \S 2, we review the definition of $\epsilon_i$ and the motivation in homotopy theory.
In \S 3, 4, 5 and 6, we investigate the divisibility of $\epsilon_i$.
These four sections are purely algebraic.
In \S 7, we apply these results to $A_n$-types of gauge groups.
Especially, we estimate the growth of the number of $A_n$-types of gauge groups of principal $\SU(2)$-bundles over $S^4$.
\par
The author is so grateful to Professor Akira Kono and Doctors Kentaro Mitsui and Minoru Hirose for fruitful discussions.

\section{Definition and motivation}
We review the definition of $\{\epsilon_i\}$.
Let $P_k$ be a principal $\SU(2)$-bundle over $S^4$ with $\langle c_2(P_k),[S^4]\rangle =k\in \bm{Z}$.
According to \cite{Tsu}, the gauge group $\mathcal{G}(P_k)$ is $A_n$-equivalent to a topological group $\mathcal{G}(P_0)=\Map(S^4,\SU(2))$ if and only if the map
\[
S^4\vee \bm{H}P^n\stackrel{k\vee i}{\rightarrow} \bm{H}P^\infty \vee \bm{H}P^\infty \stackrel{\nabla}{\rightarrow}\bm{H}P^\infty
\]
extends over $S^4\times \bm{H}P^n$, where $k:S^4\to \bm{H}P^\infty$ is a classifying map of $P_k$, $i:\bm{H}P^n\to \bm{H}P^\infty$ is the inclusion and $\nabla:\bm{H}P^\infty \vee \bm{H}P^\infty \to \bm{H}P^\infty$ is the folding map.
\par
Now, we assume there exists the following homotopy commutative diagram:
\[
\xymatrix{
S^4\vee \bm{H}P^n \ar[r]^-{k\vee i} \ar[d]_j & \bm{H}P^\infty \vee \bm{H}P^\infty \ar[r]^-{\nabla} & \bm{H}P^\infty \ar[d]^{\rm localization}\\
S^4\times \bm{H}P^n \ar[rr]^-f & & \bm{H}P^\infty_{(p)} 
}
\]
where $p$ is a prime and $j:S^4\vee \bm{H}P^n\to S^4\times \bm{H}P^n$ is the inclusion.
\par
We denote the localization of the ring of integers by the prime ideal $(p)\subset \bm{Z}$ by $\bm{Z}_{(p)}$.
The $p$-localized complex $K$-theory $K(\bm{H}P^\infty_{(p)})_{(p)}$ of $\bm{H}P^\infty_{(p)}$ is computed as
\begin{align*}
K(\bm{H}P^\infty_{(p)})_{(p)}=\bm{Z}_{(p)}[a].
\end{align*}
We may assume that there exists the generator $b\in H^4(\bm{H}P^\infty_{(p)};\bm{Q})$ such that
\begin{align*}
ch\,a=\sum_{j=1}^\infty\frac{2b^j}{(2j)!}.
\end{align*}
Similarly, take $u\in \tilde K(S^4)_{(p)}$ and $s\in H^4(S^4;\bm{Q})$ such that $ch\, u=s$.
Then, $f^*b=ks\times 1+1\times b$ in $H^4(S^4 \times \bm{H}P^n;\bm{Q})$ and
\begin{align*}
f^*a=ku\times 1+1\times a+\sum_{i=1}^n\epsilon _i(k)u\times a^i
\end{align*}
in $\tilde K(S^4 \times \bm{H}P^n)_{(p)}$, where $\epsilon _i(k) \in \bm{Z}_{(p)}$.
We calculate $f^*ch\, a$ and $ch\, f^*a$ as follows:
\begin{align*}
f^*ch\,a=f^*\sum_{j=1}^\infty\frac{2b^j}{(2j)!}=\sum_{j=1}^\infty \frac{2}{(2j)!}(ks\times 1+1\times b)^j=ks\times 1+\sum_{j=1}^n \left( \frac{k}{(2j+1)!}s\times b^j+\frac{2}{(2j)!}1\times b^j\right),
\end{align*}
\begin{align*}
ch\, f^*a=ch\,\left( ku\times 1+1\times a+\sum_{i=1}^\infty \epsilon _i(k)u\times a^i \right)
=&ks\times 1+1\times \sum_{j=1}^n\frac{2}{(2j)!}b^j+\sum_{i=1}^n\sum_{j=1}^n\epsilon _i(k)s\times \left( \sum_{j=1}^n\frac{2}{(2j)!}b^j \right)^i\\
=&ks\times 1+\sum_{j=1}^n\frac{2}{(2j)!}1\times b^j+\sum_{i=1}^n\sum_{l=1}^n\sum_{j_1+\cdots +j_i=l}\frac{2^i\epsilon _i(k)}{(2j_1)!\cdots (2j_i)!}s\times b^l.
\end{align*}
Then we have the following formula:
\begin{align*}
\frac{k}{(2l+1)!}=\sum_{i=1}^l\sum_{\substack{j_1+\cdots +j_i=l\\ j_1, \cdots, j_i\geq 1}}\frac{2^i\epsilon _i(k)}{(2j_1)!\cdots (2j_i)!}.
\end{align*}
From this formula, one can see that there exists the number $\epsilon _i\in \bm{Q}$ such that $\epsilon _i(k)=\epsilon _ik$ for each $i$.
Of course, the sequence $\{\epsilon_i\}_{i=1}^\infty$ satisfy the following formula for each $l$:
\begin{align*}
\frac{1}{(2l+1)!}=\sum_{i=1}^l\sum_{\substack{j_1+\cdots +j_i=l\\ j_1, \cdots, j_i\geq 1}}\frac{2^i\epsilon _i}{(2j_1)!\cdots (2j_i)!}.
\end{align*}
For example, $\epsilon _1=1/6$, $\epsilon _2=-1/180$, $\epsilon _3=1/1512$ etc.
From the above argument, if the map $({\rm localization})\nabla(k\vee i):S^4\vee \bm{H}P^n\to \bm{H}P^\infty_{(p)}$ extends over $S^4\times \bm{H}P^n$, then $\epsilon_1k,\cdots ,\epsilon_nk\in \bm{Z}_{(p)}$.
\par
Tsukuda \cite{Tsu01} defines a non-negative integer $d_p(k)$ for a prime $p$ and an integer $k$ as the largest $n$ such that there exists an extension of
\[
S^4\vee \bm{H}P^n\stackrel{k\vee i}{\rightarrow} \bm{H}P^\infty \vee \bm{H}P^\infty \stackrel{\nabla}{\rightarrow}\bm{H}P^\infty \stackrel{\rm localization}{\to}\bm{H}P^\infty_{(p)}
\]
over $S^4\times \bm{H}P^n$.
Remark $d_p(0)=\infty$.
Clearly, if we define $\epsilon_0=1$, then
\[
d_p(k)\leq d'_p(k):=\max \{\,n\in \bm{Z}_{\geq 0}\, |\, \epsilon_nk\in \bm{Z}_{(p)}\, \}.
\]
It is shown that $d_p(k)=d_p(k')$ for any prime $p$ if the classifying spaces $B\mathcal{G}(P_k)$ and $B\mathcal{G}(P_{k'})$ are homotopy equivalent.
Lemma 2.4 in \cite{Tsu01} asserts that $d'_p(k)<\infty$ (therefore $d_p(k)<\infty$) for $k\not=0$ and any prime $p$.
But the proof is invalid for $p=2$.
We will give a correct proof for this case in \S 4.
\par
We also review the result of \cite{Tsu}.
If $\mathcal{G}(P_k)$ and $\mathcal{G}(P_{k'})$ are $A_n$-equivalent, then $\min \{n,d_p(k)\}=\min \{n,d_p(k')\}$ for any prime $p$.
Let $p$ be an odd prime.
For $i<(p-1)/2$, $\epsilon_i\in \bm{Z}_{(p)}$.
For $(p-1)/2\leq i<p-1$, $p\epsilon_i\in \bm{Z}_{(p)}$.
Moreover, $\epsilon_{(p-1)/2}\not\in \bm{Z}_{(p)}$, $p\epsilon_{p-1}\not\in \bm{Z}_{(p)}$ and $p^2\epsilon_{p-1}\in \bm{Z}_{(p)}$.
We will generalize these results in \S 4 and 5.

\section{An explicit formula for $\epsilon_i$}
Algebraically, the sequence $\{\epsilon_i\}_{i=0}^\infty$ of rational numbers is defined by the following formula inductively:
\[
\frac{1}{(2l+1)!}=\sum_{i=1}^l\sum_{\substack{j_1+\cdots +j_i=l\\ j_1, \cdots, j_i\geq 1}}\frac{2^i\epsilon _i}{(2j_1)!\cdots (2j_i)!}
\]
and $\epsilon_0=1$.
Equivalently, $\{\epsilon_i\}$ is defined by the equality
\[
\sum_{l=0}^\infty \frac{x^l}{(2l+1)!}=\sum_{i=0}^\infty \epsilon _i\left( \sum_{j=1}^\infty \frac{2x^j}{(2j)!}\right)^i
\]
in the ring of formal power series $\bm{Q}[[x]]$.
\begin{prp}
The rational number $\epsilon_i$ is the $i$-th coefficient of the Taylor expansion of $1/f'(x)$ at $0\in \bm{C}$ for
\[
f(x)=\left(\cosh^{-1}\left(1+\frac{x}{2}\right)\right)^2,
\]
where $f$ is holomorphic in a neighborhood of $0$.
\end{prp}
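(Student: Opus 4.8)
The plan is to recognize both power series in the defining identity as elementary functions and then to reduce the statement to the derivative formula for an inverse function. Write $h(y)=\sum_{i=0}^\infty \epsilon_i y^i$, $g(x)=\sum_{j=1}^\infty \frac{2x^j}{(2j)!}$ and $\phi(x)=\sum_{l=0}^\infty \frac{x^l}{(2l+1)!}$, regarded as formal power series over $\bm{Q}$. The defining relation for $\{\epsilon_i\}$ is precisely $h(g(x))=\phi(x)$, so composing on the right with the formal inverse of $g$ will be the governing idea.

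First I would record the purely algebraic identity $g'(x)=\phi(x)$. This follows from termwise differentiation together with $\frac{2j}{(2j)!}=\frac{1}{(2j-1)!}$, which turns $g'(x)=\sum_{j\geq 1}\frac{2j\,x^{j-1}}{(2j)!}$ into $\sum_{j\geq 1}\frac{x^{j-1}}{(2j-1)!}=\phi(x)$. Since $g(x)=x+\tfrac{1}{12}x^2+\cdots$ has $g'(0)=1\neq 0$, the series $g$ is invertible; let $F=g^{-1}$. From $h\circ g=\phi$ I get $h=\phi\circ F=g'\circ F$, and feeding this into the inverse-function derivative relation $g'(F(y))\,F'(y)=1$ yields
\[
h(y)=\phi(F(y))=g'(F(y))=\frac{1}{F'(y)}.
\]
Thus $\epsilon_i$ is the $i$-th Taylor coefficient of $1/F'$, and everything reduces to identifying $F$ with the holomorphic function $f$ of the statement.

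For that identification I would pass to the variable $t$ via $x=t^2$: since $\cosh t=\sum_{n\geq 0}t^{2n}/(2n)!$, one has $g(t^2)=2(\cosh t-1)$, so $y=g(x)$ is equivalent to $\cosh\sqrt{x}=1+y/2$, hence to $\sqrt{x}=\cosh^{-1}(1+y/2)$ on the principal branch near the origin, i.e.\ to $x=\bigl(\cosh^{-1}(1+y/2)\bigr)^2=f(y)$. This shows $F=f$ as germs at $0$, completing the argument. As a consistency check one may also compute $f'(y)=\cosh^{-1}(1+y/2)/\sinh\!\bigl(\cosh^{-1}(1+y/2)\bigr)$ directly and recover $\epsilon_1=1/6$, $\epsilon_2=-1/180$ from the expansion of $1/f'$.

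The step I expect to require the most care is this last identification, because $\cosh^{-1}$ has a branch point at argument $1$ and $\sqrt{x}$ is not single-valued, so I must check that the square $\bigl(\cosh^{-1}(1+y/2)\bigr)^2$ is nevertheless holomorphic at $y=0$ and genuinely equals the formal inverse $g^{-1}$, rather than merely inverting $g$ at the level of formal series. Concretely, from $\cosh t=1+t^2/2+\cdots$ the relation $1+y/2=\cosh t$ gives $y=t^2+\cdots$ with $t^2$ an honest holomorphic function of $y$ vanishing to first order; hence $f(y)=t^2$ is holomorphic near $0$ with $f(0)=0$ and $f'(0)=1$, and by uniqueness of the local inverse it coincides with $g^{-1}$. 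Once this is secured, the equality $\epsilon_i=[y^i]\,\bigl(1/f'(y)\bigr)$ follows.
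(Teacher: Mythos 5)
Your argument is correct and follows essentially the same route as the paper: both identify the series $\sum_j 2x^j/(2j)!$ with $2\cosh\sqrt{x}-2$, observe that its derivative is $\sum_l x^l/(2l+1)!$, and combine the defining relation with the inverse-function derivative formula to get $\sum_i\epsilon_i y^i=1/f'(y)$. Your extra care about the branch of $\cosh^{-1}$ and the holomorphy of the local inverse is a welcome elaboration of a point the paper passes over silently, but it is not a different method.
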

\begin{proof}
Define a holomorphic function $h$ by
\[
h(x)=2\cosh \sqrt{x}-2=\sum_{i=1}^\infty \frac{2}{(2i)!}x^i
\]
in a neighborhood of 0.
Then $f$ given by the above formula is the inverse function of $h$.
We also define $g$ by
\[
g(x)=\sum_{i=0}^\infty \epsilon_ix^i.
\]
Then, formally, $h'(x)=g(h(x))$ by the definition of $\epsilon_i$.
Therefore, we have $g(x)=1/f'(x)$.
\end{proof}
The next proposition is proved by easy computation.
\begin{prp}
The holomorphic function $f$ satisfies the following differential equation:
\[
x(x+4)f''(x)+(x+2)f'(x)-2=0.
\]
\end{prp}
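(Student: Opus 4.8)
The plan is to avoid differentiating the iterated composition $\left(\cosh^{-1}(1+x/2)\right)^2$ directly, and instead introduce the parameter $y=\cosh^{-1}(1+x/2)$, so that $\cosh y=1+\tfrac{x}{2}$ and $f=y^2$. This is exactly the inverse substitution $x=h(y)=2\cosh\sqrt{\,\cdot\,}$ appearing in the previous proposition, rephrased so that $y$ itself (rather than $y^2$) is the working variable. Treating $y$ as a function of $x$ defined implicitly by $\cosh y=1+\tfrac{x}{2}$, I would first record the basic relation obtained by differentiating this identity: $\sinh y\cdot y'=\tfrac12$, hence $y'=\tfrac{1}{2\sinh y}$, valid in a punctured neighbourhood of $0$ and extending holomorphically across $0$.

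Next I would compute the two derivatives of $f=y^2$ in closed form in terms of $y$. For the first derivative, $f'=2yy'=\dfrac{y}{\sinh y}$. For the second, differentiating $f'=y/\sinh y$ and substituting $y'=1/(2\sinh y)$ gives
\[
f''=\frac{y'\sinh y-y\cosh y\,y'}{\sinh^2 y}=\frac{\sinh y-y\cosh y}{2\sinh^3 y}.
\]
These expressions are the whole content of the computation; everything after this is algebraic simplification.

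The key observation that makes the verification clean is that the polynomial coefficients in the differential equation become elementary hyperbolic expressions under the substitution. Indeed $x+2=2\cosh y$, while $x(x+4)=(2\cosh y-2)(2\cosh y+2)=4(\cosh^2 y-1)=4\sinh^2 y$. I would then substitute into the left-hand side of the claimed equation: the factor $4\sinh^2 y$ cancels the $\sinh^3 y$ in the denominator of $f''$, yielding $x(x+4)f''=\dfrac{2(\sinh y-y\cosh y)}{\sinh y}$, while $(x+2)f'=\dfrac{2y\cosh y}{\sinh y}$. Adding these, the two $\pm 2y\cosh y$ terms cancel and the remaining $2\sinh y/\sinh y$ collapses to exactly $2$, so $x(x+4)f''+(x+2)f'-2=0$, as desired.

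I do not expect a genuine obstacle here, since the statement is an identity verified by direct computation; the only point requiring a little care is the bookkeeping in the second derivative and checking that all the intermediate expressions, which superficially have $\sinh y$ in denominators, are in fact holomorphic at $x=0$ (equivalently $y=0$). One can either note that $f$ is holomorphic near $0$ by the earlier proposition and that the identity, valid on the dense set where $\sinh y\neq 0$, therefore persists at $0$ by continuity, or equivalently expand the relevant ratios such as $y/\sinh y$ as even power series in $y$ to see directly that no singularity arises.
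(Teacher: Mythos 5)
Your proof is correct, and it carries out exactly the direct computation that the paper omits (the paper simply states the proposition "is proved by easy computation" and gives no details). The substitution $y=\cosh^{-1}(1+x/2)$, the identities $x+2=2\cosh y$ and $x(x+4)=4\sinh^2 y$, and the resulting cancellation are all verified correctly, so your argument is a valid filling-in of the asserted computation.
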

If the power series
\[
\sum_{i=0}^\infty a_ix^i
\]
satisfies the above equation, then
\[
a_1=1 \, , \, a_{i+1}=-\frac{i^2}{(2i+2)(2i+1)}a_i\hspace{1em}(i\geq 1).
\]
From these equations,
\[
a_i=(-1)^{i-1}\frac{2((i-1)!)^2}{(2i)!}
\]
for $i\geq 1$.
Hence,
\[
f(x)=\sum_{i=1}^\infty (-1)^{i-1}\frac{2((i-1)!)^2}{(2i)!}x^i
\]
and
\[
f'(x)=\sum_{i=0}^\infty (-1)^i\frac{(i!)^2}{(2i+1)!}x^i.
\]
Therefore,
\[
g(x)=\frac{1}{f'(x)}
=\sum_{j=0}^\infty(-1)^j\left(\sum_{i=1}^\infty (-1)^i\frac{(i!)^2}{(2i+1)!}x^i\right)^j
=1+\sum_{j=1}^\infty\sum_{i_1,\cdots ,i_j\geq 1}(-1)^{j+i_1+\cdots +i_j}\frac{(i_1!)^2\cdots (i_j!)^2}{(2i_1+1)!\cdots (2i_j+1)!}x^{i_1+\cdots +i_j}.
\]
This implies the following formula.
\begin{thm}\label{formula}
\[
\epsilon_l=\sum_{j=1}^l\sum_{\substack{i_1+\cdots +i_j=l\\ i_1, \cdots, i_j\geq 1}}(-1)^{j+l}\frac{(i_1!)^2\cdots (i_j!)^2}{(2i_1+1)!\cdots (2i_j+1)!}
\]
\end{thm}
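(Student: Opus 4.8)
The plan is to obtain the formula by extracting the coefficient of $x^l$ from the expansion of $g(x)=1/f'(x)$ displayed just above, because by Proposition 3.1 (together with the definition $g(x)=\sum_{i=0}^\infty\epsilon_ix^i$) the number $\epsilon_l$ is exactly that coefficient. Hence the whole task reduces to the bookkeeping of signs and factorials in that expansion.

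First I would note that the explicit series for $f'$ computed from the differential equation,
\[
f'(x)=\sum_{i=0}^\infty(-1)^i\frac{(i!)^2}{(2i+1)!}x^i,
\]
has constant term $(0!)^2/1!=1$. Writing $f'(x)=1+w(x)$ with $w(x)=\sum_{i\geq 1}(-1)^i\frac{(i!)^2}{(2i+1)!}x^i$, the series $w$ has no constant term, so the geometric expansion $1/(1+w)=\sum_{j\geq 0}(-1)^jw^j$ is a legitimate identity in $\bm{Q}[[x]]$: only finitely many $j$ contribute to each fixed power of $x$.

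Next I would expand $w(x)^j$ as a sum over ordered tuples $(i_1,\dots,i_j)$ of positive integers, producing the monomial $x^{i_1+\cdots+i_j}$ with coefficient $(-1)^{i_1+\cdots+i_j}\prod_{m=1}^j\frac{(i_m!)^2}{(2i_m+1)!}$. Collecting the terms with $i_1+\cdots+i_j=l$ and combining the two sign factors through $(-1)^j(-1)^{i_1+\cdots+i_j}=(-1)^{j+l}$ gives precisely the stated sum over compositions of $l$ into $j$ positive parts; since each part is at least $1$, the index $j$ ranges only from $1$ to $l$, which fixes the outer summation.

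There is essentially no obstacle here, as all the analytic input---the solution of the recursion for the coefficients of $f'$---is already in hand and the remaining argument is pure coefficient extraction. The only points meriting care are that $w$ has vanishing constant term, so that the geometric series is summable coefficientwise and each $\epsilon_l$ is a finite sum, and that the correct index set consists of ordered tuples (equivalently, compositions) rather than partitions, so that no multinomial multiplicities intrude.
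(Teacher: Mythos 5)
Your proposal is correct and follows exactly the paper's own route: the paper likewise writes $f'(x)=1+w(x)$ with $w$ the positive-degree part of the series obtained from the differential equation, expands $1/f'(x)=\sum_{j\geq 0}(-1)^jw(x)^j$, and reads off the coefficient of $x^l$ as a sum over ordered tuples $(i_1,\dots,i_j)$ with $i_1+\cdots+i_j=l$. Your added remarks on the vanishing constant term of $w$ and on compositions versus partitions are sound clarifications of the same computation.
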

\section{Divisibility of $\epsilon_i$ by 2}
For a prime $p$ and a rational number $n$, we denote the $p$-adic valuation of $n$ by $v_p(n)$.
Equivalently, if
\[
n=\frac{p^at}{p^bs}
\]
where $s$ and $t$ are integers prime to $p$, then $v_p(n)=a-b$.
First, we observe the divisibility of factorials.
\begin{lem}\label{factorial}
Let $p$ be a prime.
Then, for a integer
\[
n=n_rp^r+n_{r-1}p^{r-1}+\cdots +n_0
\]
where $0\leq n_i<p$ for each $i$,
\[
v_p(n!)=\frac{1}{p-1}(n-n_0-\cdots -n_r)
\]
\end{lem}
\begin{proof}
First, we remark the following:
\[
\# \{\, k\in \bm{Z}\,|\,1\leq k\leq n,k\, {\rm is\, divisible\, by\,}p^i\,\}=n_rp^{r-i}+n_{r-1}p^{r-i-1}+\cdots +n_i
\]
for $1\leq i\leq r$.
Hence
\begin{align*}
v_p(n!)=&(n_rp^{r-1}+n_{r-1}p^{r-2}\cdots +n_1)+(n_rp^{r-2}+n_{r-2}p^{r-3}+\cdots +n_2)+\cdots +n_r\\
=&n_r\frac{p^r-1}{p-1}+n_{r-1}\frac{p^{r-1}-1}{p-1}+\cdots +n_1=\frac{1}{p-1}(n-n_0-\cdots -n_r).
\end{align*}
\end{proof}
\par
For $p=2$, $v_2(n!)=n-n_0-\cdots -n_r$.
\begin{lem}\label{coeff2}
For a integer
\[
n=n_r2^r+n_{r-1}2^{r-1}+\cdots +n_0
\]
where $0\leq n_i<2$ for each $i$,
\[
v_2\left(\frac{(n!)^2}{(2n+1)!}\right) =-n_0-\cdots -n_r.
\]
\end{lem}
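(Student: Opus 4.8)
The plan is to reduce everything to the $p=2$ case of Lemma \ref{factorial}, which expresses $v_2(m!)$ as $m$ minus the sum of the binary digits of $m$. Since
\[
v_2\!\left(\frac{(n!)^2}{(2n+1)!}\right) = 2\,v_2(n!) - v_2((2n+1)!),
\]
it suffices to compute $v_2(n!)$ and $v_2((2n+1)!)$ separately in terms of the binary digits of $n$, and then subtract.

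First I would record the valuation of $n!$ directly from the $p=2$ specialization of Lemma \ref{factorial} noted just above, namely $v_2(n!) = n - (n_0 + \cdots + n_r)$. The key step is then to read off the binary expansion of $2n+1$. Writing
\[
2n+1 = n_r 2^{r+1} + n_{r-1} 2^r + \cdots + n_0 2 + 1,
\]
one sees that the binary digits of $2n+1$ are exactly those of $n$ shifted up by one position together with a unit digit equal to $1$; in particular its digit sum is $(n_0 + \cdots + n_r) + 1$. Applying Lemma \ref{factorial} to the integer $2n+1$ therefore yields
\[
v_2((2n+1)!) = (2n+1) - \bigl((n_0 + \cdots + n_r) + 1\bigr) = 2n - (n_0 + \cdots + n_r).
\]

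Finally I would combine the two computations to obtain
\[
v_2\!\left(\frac{(n!)^2}{(2n+1)!}\right) = 2\bigl(n - (n_0 + \cdots + n_r)\bigr) - \bigl(2n - (n_0 + \cdots + n_r)\bigr) = -(n_0 + \cdots + n_r),
\]
which is the claimed formula. There is no serious obstacle here: the whole statement is a bookkeeping consequence of Lemma \ref{factorial}, and the only point requiring a moment's care is the correct reading of the binary expansion of $2n+1$ — namely that multiplying by $2$ merely shifts the digits while adding $1$ fills the freed unit slot without any carrying, so the digit sum increases by exactly one.
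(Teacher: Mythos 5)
Your proposal is correct and follows the same route as the paper: the paper's proof simply states $v_2(n!)=n-n_0-\cdots-n_r$ and $v_2((2n+1)!)=2n-n_0-\cdots-n_r$ and subtracts, exactly as you do. Your only addition is to spell out why the digit sum of $2n+1$ exceeds that of $n$ by one, which the paper leaves implicit.
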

\begin{proof}
Since $v_2((2n+1)!)=2n-n_0-\cdots -n_r$ and $v_2(n!)=n-n_0-\cdots -n_r$, the formula above follows.
\end{proof}
Now, we observe the divisibility of $\epsilon_i$ by 2.
\begin{prp}
For positive integers $n_1,\cdots ,n_m$ and $l=n_1+\cdots +n_m$, if $n_j\geq 2$ for some $j$, then
\[
2^{l-1}\frac{(n_1!)^2\cdots (n_m!)^2}{(2n_1+1)!\cdots (2n_m+1)!}\in \bm{Z}_{(2)}.
\]
\end{prp}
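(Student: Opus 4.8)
The plan is to reduce the asserted integrality to the nonnegativity of a single $2$-adic valuation and then settle that by the valuation formulas already in hand. Since $x\in\bm{Z}_{(2)}$ precisely when $v_2(x)\geq 0$, it suffices to prove
\[
v_2\!\left(2^{l-1}\frac{(n_1!)^2\cdots (n_m!)^2}{(2n_1+1)!\cdots (2n_m+1)!}\right)\geq 0 .
\]
First I would evaluate this valuation factor by factor. Writing $s_2(n)$ for the sum of the binary digits $n_0+\cdots+n_r$ of $n$, Lemma \ref{coeff2} gives $v_2\bigl((n_j!)^2/(2n_j+1)!\bigr)=-s_2(n_j)$ for each $j$, while $v_2(2^{l-1})=l-1$. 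By additivity of $v_2$ over products, the quantity above equals $l-1-\sum_{j=1}^m s_2(n_j)$.

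The claim thus becomes the combinatorial inequality $\sum_{j=1}^m s_2(n_j)\leq l-1$, and since $l=\sum_{j=1}^m n_j$ this is equivalent to $\sum_{j=1}^m\bigl(n_j-s_2(n_j)\bigr)\geq 1$. Here I would invoke Lemma \ref{factorial} at $p=2$, which reads $v_2(n!)=n-s_2(n)$; hence each summand is exactly $v_2(n_j!)$, and the target inequality is just $\sum_{j=1}^m v_2(n_j!)\geq 1$.

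At this point the hypothesis does all the work. Because $n!$ is divisible by $2$ whenever $n\geq 2$, every index $j$ with $n_j\geq 2$ contributes $v_2(n_j!)\geq 1$, and every remaining index contributes the nonnegative quantity $v_2(n_i!)\geq 0$; the assumption that $n_j\geq 2$ for at least one $j$ therefore forces $\sum_{j=1}^m v_2(n_j!)\geq 1$, as required. I do not expect a genuine obstacle once Lemma \ref{coeff2} is available: the entire content is the bookkeeping that converts the prefactor $2^{l-1}$ and the aggregated digit sums into $\sum_{j=1}^m v_2(n_j!)-1\geq 0$, the single unit of slack in $l-1$ being supplied precisely by the existence of one index with $n_j\geq 2$.
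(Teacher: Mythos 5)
Your proof is correct and follows essentially the same route as the paper: both arguments rest on Lemma \ref{coeff2} together with the observation that the binary digit sum satisfies $s_2(n)\leq n$, with the strict inequality $s_2(n)\leq n-1$ (equivalently $v_2(n!)\geq 1$) for the one index with $n_j\geq 2$ supplying the needed unit of slack. The paper phrases this factor by factor as $2^{n}(n!)^2/(2n+1)!\in\bm{Z}_{(2)}$ and $2^{n-1}(n!)^2/(2n+1)!\in\bm{Z}_{(2)}$ for $n>1$, while you aggregate the valuations, but the content is identical.
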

\begin{proof}
From Lemma \ref{coeff2},
\begin{align*}
2^n\frac{(n!)^2}{(2n+1)!} \in \bm{Z}_{(2)}.
\end{align*}
Moreover, if $n>1$,
\begin{align*}
2^{n-1}\frac{(n!)^2}{(2n+1)!} \in \bm{Z}_{(2)}.
\end{align*}
The conclusion follows form this.
\end{proof}
From this proposition and Theorem \ref{formula},
\[
\epsilon_l\equiv 6^{-l} \mod \, 2^{-l+1}\bm{Z}_{(2)}.
\]
Then we have the following theorem.
\begin{thm}
\[
v_2(\epsilon_l)=-l
\]
\end{thm}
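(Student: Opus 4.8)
The plan is to read the statement off directly from the congruence
\[
\epsilon_l\equiv 6^{-l}\mod 2^{-l+1}\bm{Z}_{(2)}
\]
already obtained from the preceding Proposition and Theorem \ref{formula}, using nothing beyond the non-archimedean nature of $v_2$. First I would recall why this congruence holds, so that the deduction is self-contained. Applying Theorem \ref{formula}, I split the defining sum for $\epsilon_l$ into the single term indexed by $j=l$ with $i_1=\cdots=i_l=1$ and the sum of all remaining terms. In the distinguished term every factor equals $(1!)^2/(3!)=1/6$ and the sign is $(-1)^{l+l}=1$, so it contributes exactly $6^{-l}$.

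In every remaining term at least one index $i_k$ is $\geq 2$, so the preceding Proposition applies (taking $m=j$ and $n_k=i_k$) and shows that $2^{l-1}$ times that term lies in $\bm{Z}_{(2)}$; hence each such term, and therefore their sum, lies in $2^{-l+1}\bm{Z}_{(2)}$. This is precisely the displayed congruence. Next I would compute the valuation of the leading term: since $6=2\cdot 3$, we have $6^{-l}=2^{-l}3^{-l}$ with $3^{-l}$ a unit in $\bm{Z}_{(2)}$, so $v_2(6^{-l})=-l$.

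Finally I would write $\epsilon_l=6^{-l}+r$ with $r\in 2^{-l+1}\bm{Z}_{(2)}$, so $v_2(r)\geq -l+1>-l=v_2(6^{-l})$, and invoke the strong triangle inequality for $v_2$: when two summands have distinct valuations, the valuation of their sum equals the smaller one. This yields $v_2(\epsilon_l)=v_2(6^{-l})=-l$, as claimed.

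The only subtle point — and hence the one place to be careful — is ensuring that the error term is \emph{strictly} deeper $2$-adically than the leading term, since otherwise cancellation could occur and the ultrametric equality would fail. The Proposition furnishes the bound $v_2(r)\geq -l+1$, and because $-l+1>-l$ the required strict inequality holds with room to spare, so no borderline case ever arises and the argument goes through cleanly.
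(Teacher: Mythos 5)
Your proposal is correct and follows the paper's own route exactly: isolate the term of Theorem \ref{formula} with $j=l$ and $i_1=\cdots=i_l=1$, which contributes $6^{-l}$, bound all remaining terms by the preceding Proposition to get $\epsilon_l\equiv 6^{-l}\bmod 2^{-l+1}\bm{Z}_{(2)}$, and conclude by the ultrametric property of $v_2$. The paper leaves these last steps implicit, so your write-up simply makes the same argument explicit.
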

Then $d'_2(k)=v_2(k)$ (see \S 2 for the definition of $d'_2$) and Lemma 2.4 in \cite{Tsu01} for $p=2$ is proved.
\section{Divisibility of $\epsilon_i$ by an odd prime}
In general, divisibility of $\epsilon_i$ by an odd prime $p$ is more complicated than by 2 because the interval between a multiple of $p$ and the next one is longer.
But for $p=3$, we will have a similar result.
\begin{lem}\label{factorial2}
Let $p$ be a prime.
Then, for a integer
\[
n=n_rp^r+n_{r-1}p^{r-1}+\cdots +n_0
\]
where $0\leq n_i<p$ for each $i$,
\[
v_p((2n+1)!)\leq \frac{2}{p-1}(n-n_0-\cdots -n_r)+r+1
\]
\end{lem}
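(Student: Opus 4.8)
The plan is to read both sides as $p$-adic valuations of factorials via Lemma \ref{factorial} and thereby reduce the claimed inequality to a statement about carries in base-$p$ addition. Write $s_p(m)=m_0+m_1+\cdots$ for the sum of the base-$p$ digits of $m$, so that Lemma \ref{factorial} reads $v_p(m!)=\frac{1}{p-1}(m-s_p(m))$ and $s_p(n)=n_0+\cdots+n_r$. Applying this to $m=2n+1$ and to $m=n$, I would compute
\[
v_p((2n+1)!)-\frac{2}{p-1}(n-n_0-\cdots-n_r)=\frac{1}{p-1}\bigl(1+2s_p(n)-s_p(2n+1)\bigr).
\]
Hence the assertion of the lemma is equivalent to the sharp inequality $1+2s_p(n)-s_p(2n+1)\le (r+1)(p-1)$, and the whole problem is now about digit sums.

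Next I would identify the left-hand quantity with a carry count. The standard digit-sum identity (Kummer's theorem, in the form for the multinomial coefficient $\binom{2n+1}{n,n,1}=(2n+1)!/(n!)^2$) gives
\[
1+2s_p(n)-s_p(2n+1)=(p-1)\,C,
\]
where $C$ is the number of carries that occur when the three integers $n$, $n$ and $1$ are added together in base $p$. It therefore suffices to prove the clean combinatorial bound $C\le r+1$.

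Finally I would bound the carries directly from the digit structure of $n$. Let $c_i$ denote the carry entering position $i$. At position $0$ the column sum is $2n_0+1\le 2p-1$, producing an outgoing carry $c_1\le 1$; and inductively, for $i\ge 1$ the column sum $2n_i+c_i\le 2(p-1)+1=2p-1$ again produces $c_{i+1}\le 1$, so no carry ever exceeds $1$. Since $n$ has nonzero digits only in positions $0,\ldots,r$, at position $r+1$ the two copies of $n$ contribute $0$, the column sum equals the incoming carry $c_{r+1}\le 1<p$, and nothing further is generated. Thus carries can be generated only at the $r+1$ positions $0,1,\ldots,r$, giving $C\le r+1$ and completing the proof.

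I expect the main subtlety to be exactly the off-by-one in this last step: a careless count would permit a carry out of position $r+1$ and yield only the weaker bound $r+2$. The inductive observation that no carry exceeds $1$—so that the top overflow digit is harmless—is what pins the count down to $r+1$, and this is where the factor is lost relative to the $p=2$ case (where equality is forced). If one prefers to avoid invoking Kummer's theorem, the identity $1+2s_p(n)-s_p(2n+1)=(p-1)C$ can instead be derived directly by summing over all positions $i$ the per-column relations expressing that $2n_i$ plus the incoming carry (plus the extra $1$ at position $0$) equals the output digit plus $p$ times the outgoing carry.
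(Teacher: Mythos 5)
Your proof is correct, and it takes a genuinely different route from the paper's. The paper works directly with Legendre's formula: for $1\le i\le r+1$ the number of multiples of $p^i$ among $1,\dots,2n+1$ is at most $2(n_rp^{r-i}+\cdots+n_i)+1$, and there are no multiples of $p^{i}$ for $i>r+1$; summing these $r+1$ term-by-term bounds gives exactly $\frac{2}{p-1}(n-n_0-\cdots-n_r)+r+1$. You instead pass to the digit-sum form of Legendre's formula (Lemma \ref{factorial}), correctly reduce the claim to $1+2s_p(n)-s_p(2n+1)\le (r+1)(p-1)$, recognize the left-hand side as $(p-1)\,v_p\bigl(\binom{2n+1}{n,\,n,\,1}\bigr)=(p-1)C$ with $C$ the carry count for $n+n+1$ in base $p$, and bound $C\le r+1$ by the observation that no carry ever exceeds $1$, so no carry escapes past position $r$. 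Both arguments prove the same intermediate inequality $v_p((2n+1)!)\le 2v_p(n!)+r+1$; the paper's is shorter and needs no auxiliary identity, while yours makes the combinatorial content explicit and, as a bonus, pins down exactly when equality holds (a carry out of every position $0,\dots,r$, i.e. $2n_i+1\ge p$ for all $i$, which for $p=2$ recovers the exact evaluation used in Lemma \ref{coeff2}). Your key steps --- the identity $1+2s_p(n)-s_p(2n+1)=(p-1)C$ and the bound $c_{i+1}\le 1$ including the treatment of position $r+1$ --- are all sound, so there is no gap.
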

\begin{proof}
First, we remark the following:
\[
\# \{\, k\in \bm{Z}\,|\,1\leq k\leq 2n+1,k\, {\rm is\, divisible\, by\,}p^i\,\}\leq 2(n_rp^{r-i}+n_{r-1}p^{r-i-1}+\cdots +n_i)+1
\]
for $1\leq i\leq r+1$.
Hence
\begin{align*}
v_p((2n+1)!)\leq &2(n_rp^{r-1}+n_{r-1}p^{r-2}\cdots +n_1)+1+2(n_rp^{r-2}+n_{r-2}p^{r-3}+\cdots +n_2)+1+\cdots +2n_r+1+1\\
=&\frac{2}{p-1}(n-n_0-\cdots -n_r)+r+1.
\end{align*}
\end{proof}
\begin{lem}\label{coeffp}
For an odd prime $p$ and a positive integer $n$,
\[
v_p\left( \frac{(n!)^2}{(2n+1)!}\right)\geq -\frac{2n}{p-1}.
\]
Moreover, this equality holds if and only if $n=(p-1)/2$.
\end{lem}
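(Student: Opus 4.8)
The plan is to avoid the one-sided estimate of Lemma~\ref{factorial2} and instead compute the valuation exactly via Legendre's formula, which is precisely the content of Lemma~\ref{factorial}; this exact expression is what makes the equality clause tractable. Let $\sigma(m)$ denote the sum of the base-$p$ digits of a nonnegative integer $m$, so that Lemma~\ref{factorial} reads $v_p(m!)=(m-\sigma(m))/(p-1)$. Applying this to $n!$ and to $(2n+1)!$ and subtracting gives the closed form
\[
v_p\!\left(\frac{(n!)^2}{(2n+1)!}\right)
=\frac{2\bigl(n-\sigma(n)\bigr)-\bigl(2n+1-\sigma(2n+1)\bigr)}{p-1}
=\frac{\sigma(2n+1)-2\sigma(n)-1}{p-1}.
\]
Everything will then be read off from this identity.

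With the closed form in hand, the asserted bound $v_p\!\left((n!)^2/(2n+1)!\right)\geq -2n/(p-1)$ becomes, after clearing the positive denominator $p-1$, the inequality
\[
\bigl(\sigma(2n+1)-1\bigr)+2\bigl(n-\sigma(n)\bigr)\geq 0.
\]
Here I would simply observe that both summands are nonnegative: $\sigma(2n+1)\geq 1$ since $2n+1$ is a positive integer and hence has at least one nonzero base-$p$ digit, and $n-\sigma(n)=\sum_{i\geq 1}n_i(p^i-1)\geq 0$ because a digit sum never exceeds the number itself. This settles the inequality with essentially no computation.

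The only step that requires genuine care is the equality clause, which is where I expect the main (though still modest) obstacle to lie. Equality forces both nonnegative summands above to vanish simultaneously. The condition $n-\sigma(n)=0$ holds exactly when $n$ is a single base-$p$ digit, i.e. $n<p$, while $\sigma(2n+1)=1$ holds exactly when $2n+1$ is a power of $p$. The delicate point is to show these can be met together only for $n=(p-1)/2$: under $n<p$ and $n\geq 1$ one has $3\leq 2n+1\leq 2p-1$, and since $(p-1)^2>0$ gives $2p-1<p^2$ for every odd prime $p$, the sole power of $p$ in this range is $p$ itself. Hence $2n+1=p$, i.e. $n=(p-1)/2$, and conversely this value satisfies both constraints (and yields valuation $0-1=-1=-2n/(p-1)$), completing the characterization.
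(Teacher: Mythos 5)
Your proof is correct, and it takes a cleaner route than the paper's. The paper applies the exact Legendre formula (Lemma~\ref{factorial}) only to $n!$, bounds $v_p((2n+1)!)$ from above by the separate estimate of Lemma~\ref{factorial2} (giving $v_p((n!)^2/(2n+1)!)\geq -r-1$), and then needs a case split: for $n\geq p$ it checks $\frac{p-1}{2}(r+1)<p^r\leq n$ to get a strict inequality, and for $n<p$ it computes the valuation directly to locate the equality case. You instead apply Legendre exactly to both factorials, obtaining the closed form $v_p\bigl((n!)^2/(2n+1)!\bigr)=\bigl(\sigma(2n+1)-2\sigma(n)-1\bigr)/(p-1)$, after which the inequality is the sum of the two manifestly nonnegative quantities $\sigma(2n+1)-1$ and $2(n-\sigma(n))$, and the equality clause falls out by forcing both to vanish ($n<p$ and $2n+1$ a power of $p$, hence $2n+1=p$ since $2p-1<p^2$). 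What your approach buys is the elimination of both the auxiliary Lemma~\ref{factorial2} and the case analysis, and a more transparent characterization of equality; what the paper's approach buys is essentially nothing extra here, though its crude bound $-r-1$ is the kind of estimate that recycles easily in Proposition~\ref{divisibilityp} and the surrounding divisibility arguments. All steps in your write-up check out, including the verification that $n=(p-1)/2$ attains the value $-1=-2n/(p-1)$.
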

\begin{proof}
Let $n=n_rp^r+n_{r-1}p^{r-1}+\cdots +n_0$
where $0\leq n_i<p$ for each $i$, especially $n_r\not=0$.
From Lemma \ref{factorial} and \ref{factorial2},
\[
v_p\left( \frac{(n!)^2}{(2n+1)!}\right)\geq -r-1>-\frac{2n}{p-1}
\]
for $n\geq p$ since
\[
\frac{p-1}{2}(r+1)<p^r\leq n.
\]
For $n<p$,
\[
v_p\left( \frac{(n!)^2}{(2n+1)!}\right) =\left\{
\begin{array}{ll}
0 & (0\leq n<(p-1)/2) \\
-1 & ((p-1)/2\leq n<p)
\end{array}
\right.
.
\]
Then
\[
v_p\left( \frac{(n!)^2}{(2n+1)!}\right)\geq -\frac{2n}{p-1}
\]
holds for any $n$ and the equality holds if and only if $n=(p-1)/2$.
\end{proof}
This lemma implies the next proposition.
\begin{prp}\label{divisibilityp}
For an odd prime $p$, positive integers $n_1,\cdots ,n_m$ and $l=n_1+\cdots +n_m$, then
\[
v_p\left( \frac{(n_1!)^2\cdots (n_m!)^2}{(2n_1+1)!\cdots (2n_m+1)!}\right)\geq -\frac{2l}{p-1},
\]
where the equality holds if and only if $n_i=(p-1)/2$ for each $i$.
\end{prp}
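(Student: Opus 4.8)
The plan is to reduce everything to the single-factor estimate already established in Lemma \ref{coeffp} by exploiting that the $p$-adic valuation is additive on products. Since
\[
\frac{(n_1!)^2\cdots (n_m!)^2}{(2n_1+1)!\cdots (2n_m+1)!}=\prod_{i=1}^m\frac{(n_i!)^2}{(2n_i+1)!},
\]
I would write
\[
v_p\left(\frac{(n_1!)^2\cdots (n_m!)^2}{(2n_1+1)!\cdots (2n_m+1)!}\right)=\sum_{i=1}^m v_p\left(\frac{(n_i!)^2}{(2n_i+1)!}\right).
\]
Each summand is a positive integer $n_i$, so Lemma \ref{coeffp} applies directly and gives $v_p\bigl((n_i!)^2/(2n_i+1)!\bigr)\geq -2n_i/(p-1)$.

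Summing these termwise bounds over $i=1,\dots,m$ and using $l=n_1+\cdots+n_m$ yields
\[
\sum_{i=1}^m v_p\left(\frac{(n_i!)^2}{(2n_i+1)!}\right)\geq -\sum_{i=1}^m\frac{2n_i}{p-1}=-\frac{2l}{p-1},
\]
which is exactly the claimed inequality. This is the whole content of the lower bound, and no real obstacle arises here — it is purely the additivity of $v_p$ combined with the single-factor lemma.

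For the equality clause I would argue that a sum of quantities, each bounded below, attains its lower bound if and only if every term attains its own lower bound individually. Thus the displayed inequality is an equality precisely when $v_p\bigl((n_i!)^2/(2n_i+1)!\bigr)=-2n_i/(p-1)$ for each $i$, and by the equality characterization in Lemma \ref{coeffp} this happens exactly when $n_i=(p-1)/2$ for every $i$. The only point demanding a little care — and the closest thing to an obstacle — is making sure the equality analysis is carried out in both directions: that all factors being of the form $(p-1)/2$ forces equality, and conversely that equality forces each factor into this form. Both directions follow immediately from the sharp equality statement of Lemma \ref{coeffp}, so the proposition reduces to a short, clean deduction once that lemma is invoked.
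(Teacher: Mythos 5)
Your proof is correct and is exactly the argument the paper intends: the paper gives no written proof beyond the remark that Lemma \ref{coeffp} implies the proposition, and your deduction via additivity of $v_p$ over the product, together with the sharp equality case of that lemma applied factor by factor, is precisely that implication spelled out.
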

Then, by Theorem \ref{formula}, we have
\[
\epsilon_{n(p-1)/2}\equiv (-1)^{n(p+1)/2}\frac{\left( \frac{p-1}{2}!\right)^{2n}}{(p!)^n} \mod \, p^{-n+1}\bm{Z}_{(p)}.
\]
\begin{thm}\label{coeffp}
For a non-negative integer $n$,
\[
v_p(\epsilon_{n(p-1)/2})=-n.
\]
\end{thm}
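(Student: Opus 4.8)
The plan is to read off $v_p(\epsilon_{n(p-1)/2})$ directly from the congruence displayed immediately before the statement, which we already have from Proposition \ref{divisibilityp} together with Theorem \ref{formula}. That congruence isolates a single leading term modulo the ideal $p^{-n+1}\bm{Z}_{(p)}$, so the whole argument reduces to (i) computing the $p$-adic valuation of that leading term exactly, and (ii) checking that the remaining terms, which lie in $p^{-n+1}\bm{Z}_{(p)}$, cannot cancel it.

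First I would compute the valuation of the leading coefficient
\[
c=\frac{\left(\left(\tfrac{p-1}{2}\right)!\right)^{2n}}{(p!)^n}.
\]
Since $(p-1)/2<p$, every factor of $\left(\tfrac{p-1}{2}\right)!$ is prime to $p$, so $v_p\!\left(\left(\tfrac{p-1}{2}\right)!\right)=0$ and the numerator contributes nothing. On the other hand $p!=p\cdot(p-1)!$ with $(p-1)!$ prime to $p$, so $v_p(p!)=1$ and $v_p((p!)^n)=n$. Hence $v_p(c)=0-n=-n$. The sign $(-1)^{n(p+1)/2}$ is a $p$-adic unit and does not affect the valuation.

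Then I would invoke the non-archimedean property of $v_p$. The congruence reads
\[
\epsilon_{n(p-1)/2}=(-1)^{n(p+1)/2}c+\rho,\qquad \rho\in p^{-n+1}\bm{Z}_{(p)},
\]
so that $v_p(\rho)\ge -n+1>-n=v_p\!\left((-1)^{n(p+1)/2}c\right)$. Because the two summands have distinct valuations, the valuation of their sum equals the smaller one, giving $v_p(\epsilon_{n(p-1)/2})=-n$. The case $n=0$ is consistent, since $\epsilon_0=1$ has valuation $0$.

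The genuine content of the result lies upstream, in Proposition \ref{divisibilityp}: the real obstacle is showing that among all the terms produced by the multinomial expansion in Theorem \ref{formula} the minimal valuation $-2l/(p-1)=-n$ is attained by exactly one term, namely $j=n$ with every $i_k=(p-1)/2$. Once that uniqueness is in hand—as it is by the equality clause of Proposition \ref{divisibilityp}—the theorem itself is only a matter of the routine valuation bookkeeping above, with no further difficulty.
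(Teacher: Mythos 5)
Your proof is correct and follows exactly the route the paper intends: the paper states the congruence $\epsilon_{n(p-1)/2}\equiv (-1)^{n(p+1)/2}\bigl(\tfrac{p-1}{2}!\bigr)^{2n}/(p!)^n \bmod p^{-n+1}\bm{Z}_{(p)}$ (derived from Theorem \ref{formula} and the equality clause of Proposition \ref{divisibilityp}) immediately before the theorem and leaves the rest implicit. Your valuation computation of the leading term ($v_p=-n$ since $v_p(p!)=1$ and $(p-1)/2<p$) and the ultrametric argument are precisely the missing bookkeeping.
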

Especially, $v_3(\epsilon_l)=-l$.
We also have the following estimate.
\begin{thm}\label{coeffpp}
For a non-negative integer $l<n(p-1)/2$,
\[
v_p(\epsilon_l)>-n
\]
\end{thm}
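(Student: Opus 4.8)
The plan is to read off the bound directly from the explicit formula of Theorem~\ref{formula} combined with the term-by-term valuation estimate of Proposition~\ref{divisibilityp}, using only the ultrametric property of $v_p$.

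First I would recall that Theorem~\ref{formula} writes $\epsilon_l$ as the finite signed sum
\[
\epsilon_l=\sum_{j=1}^l\sum_{\substack{i_1+\cdots +i_j=l\\ i_1,\cdots ,i_j\geq 1}}(-1)^{j+l}\frac{(i_1!)^2\cdots (i_j!)^2}{(2i_1+1)!\cdots (2i_j+1)!},
\]
each summand being, up to sign, a rational number of exactly the shape controlled by Proposition~\ref{divisibilityp}. Applying that proposition with $m=j$ and $n_k=i_k$ (so that $n_1+\cdots +n_j=l$), every summand has $p$-adic valuation at least $-\frac{2l}{p-1}$.

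The second step is to pass from the individual terms to their sum. Since $v_p$ is nonarchimedean, one has $v_p(a+b)\geq \min\{v_p(a),v_p(b)\}$, and iterating this over the finitely many compositions appearing in the formula gives
\[
v_p(\epsilon_l)\geq -\frac{2l}{p-1}.
\]
No control over cancellation is needed: a uniform lower bound on the valuation of each term automatically yields a lower bound on the valuation of the whole sum, and any cancellation among terms can only increase $v_p(\epsilon_l)$.

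Finally, I would observe that the hypothesis $l<n(p-1)/2$ is equivalent to $\frac{2l}{p-1}<n$, that is $-\frac{2l}{p-1}>-n$; chaining this with the previous display yields $v_p(\epsilon_l)>-n$, as claimed. I expect essentially no obstacle beyond Proposition~\ref{divisibilityp} itself: the only point to watch is that the desired inequality is strict, but this is supplied for free by the strict hypothesis on $l$, so---unlike the case $l=n(p-1)/2$ treated just above, where the equality clause of Proposition~\ref{divisibilityp} must be exploited---no sharpness analysis is required here.
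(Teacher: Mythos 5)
Your proposal is correct and follows exactly the paper's own argument: apply Proposition~\ref{divisibilityp} to each summand of the formula in Theorem~\ref{formula} to get the termwise bound $-\tfrac{2l}{p-1}>-n$, then conclude by the nonarchimedean property of $v_p$. The only difference is that you spell out the ultrametric step and the equivalence $l<n(p-1)/2\Leftrightarrow -\tfrac{2l}{p-1}>-n$, which the paper leaves implicit.
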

\begin{proof}
Let positive integers $i_1,\cdots ,i_m$ satisfy $i_1+\cdots +i_m=l$.
From Proposition \ref{divisibilityp},
\[
v_p\left( \frac{(i_1!)^2\cdots (i_m!)^2}{(2i_1+1)!\cdots (2i_m+1)!}\right)\geq -\frac{2l}{p-1}>-n,
\]
Therefore, by Theorem \ref{formula}, $v_p(\epsilon_l)>-n$.
\end{proof}
These results imply $d'_p(k)=(p-1)v_p(k)/2$.
\section{Further observation}
Though it suffices to know Theorem \ref{coeff2}, \ref{coeffp} and \ref{coeffpp} for our application, we see the divisibility by 5 here.
\par
For $l=2n$, by Theorem \ref{coeffp}, $v_5(\epsilon_{2n})=-n$.
Then we consider the case $l=2n+1$.
Since $v_5(\epsilon_{2n+1})\geq -n$,
\begin{align*}
\epsilon_{2n+1}\equiv (-1)^{n+1}n\cdot \frac{(2!)^{2n-2}(3!)^2}{(5!)^{n-1}7!}+(-1)^n(n+1)\cdot \frac{(2!)^{2n}(1!)^2}{(5!)^n3!}\mod \,5^{-n+1}\bm{Z}_{(5)}.
\end{align*}
The right hand side is computed as
\[
(-1)^n\frac{2^{2n}(7-2n)}{(5!)^{n-1}7!}.
\]
Then if $l\equiv 3\mod 10$, $v_5(\epsilon_l)>-[l/2]$, where $[l/2]$ represents the largest integer $\leq l/2$.
On the other hand, if $l\not \equiv 3\mod 10$, $v_5(\epsilon_l)=-[l/2]$.
\par
Actually, $\epsilon_l$ ($l=1,\cdots ,20$) is computed as follows, where the right hand sides are the prime factorizations.
\begin{align*}
\epsilon_1&=2^{-1}3^{-1}\\
\epsilon_2&=2^{-2}3^{-2}5^{-1}(-1)\\
\epsilon_3&=2^{-3}3^{-3}5^07^{-1}\\
\epsilon_4&=2^{-4}3^{-4}5^{-2}7^{-1}(-1)23\\
\epsilon_5&=2^{-5}3^{-5}5^{-2}7^{-1}11^{-1}263\\
\epsilon_6&=2^{-6}3^{-6}5^{-3}7^{-2}11^{-1}13^{-1}(-1)353\cdot 379\\
\epsilon_7&=2^{-7}3^{-7}5^{-3}7^{-2}11^{-1}13^{-1}197\cdot 797\\
\epsilon_8&=2^{-8}3^{-8}5^{-4}7^{-2}11^{-1}13^{-1}17^{-1}(-1)383\cdot 42337\\
\epsilon_9&=2^{-9}3^{-9}5^{-4}7^{-3}11^{-1}13^{-1}17^{-1}19^{-1}2689453969\\
\epsilon_{10}&=2^{-10}3^{-10}5^{-5}7^{-2}11^{-2}13^{-1}17^{-1}19^{-1}(-1)26893118531\\
\epsilon_{11}&=2^{-11}3^{-11}5^{-5}7^{-3}11^{-2}13^{-1}17^{-1}19^{-1}23^{-1}73\cdot 76722629153\\
\epsilon_{12}&=2^{-12}3^{-12}5^{-6}7^{-4}11^{-2}13^{-2}17^{-1}19^{-1}23^{-1}(-1)127\cdot 563\cdot 46721395729\\
\epsilon_{13}&=2^{-13}3^{-13}5^{-5}7^{-4}11^{-2}13^{-2}17^{-1}19^{-1}23^{-1}71\cdot 1531\cdot 20479\cdot 397849\\
\epsilon_{14}&=2^{-14}3^{-14}5^{-7}7^{-4}11^{-2}13^{-2}17^{-1}19^{-1}23^{-1}29^{-1}(-1)43\cdot 19981442744694143\\
\epsilon_{15}&=2^{-15}3^{-15}5^{-7}7^{-5}11^{-3}13^{-2}17^{-1}19^{-1}23^{-1}29^{-1}31^{-1}233\cdot 11874127314767975461\\
\epsilon_{16}&=2^{-16}3^{-16}5^{-8}7^{-5}11^{-3}13^{-2}17^{-2}19^{-1}23^{-1}29^{-1}31^{-1}(-1)319473088311274492668499\\
\epsilon_{17}&=2^{-17}3^{-17}5^{-8}7^{-5}11^{-3}13^{-2}17^{-2}19^{-1}23^{-1}29^{-1}31^{-1}103\cdot 191\cdot 11677\cdot 8295097\cdot 229156549\\
\epsilon_{18}&=2^{-18}3^{-18}5^{-9}7^{-6}11^{-3}13^{-3}17^{-2}19^{-2}23^{-1}29^{-1}31^{-1}37^{-1}(-1)811\cdot 236696258753425486925956793\\
\epsilon_{19}&=2^{-19}3^{-19}5^{-9}7^{-6}11^{-3}13^{-3}17^{-2}19^{-2}23^{-1}29^{-1}31^{-1}37^{-1}276162497983\cdot 959905866507242503\\
\epsilon_{20}&=2^{-20}3^{-20}5^{-10}7^{-6}11^{-4}13^{-3}17^{-2}19^{-2}23^{-1}29^{-1}31^{-1}37^{-1}41^{-1}(-1)269\cdot 13677071637569\cdot 225347651134721497\\
\end{align*}
\section{Applications to $A_n$-types of gauge groups}
As in \S 2, we assume there exists the following homotopy commutative diagram:
\[
\xymatrix{
S^4\vee \bm{H}P^n \ar[r]^-{k\vee i} \ar[d]_j & \bm{H}P^\infty \vee \bm{H}P^\infty \ar[r]^{\nabla} & \bm{H}P^\infty \ar[d]^{\rm localization}\\
S^4\times \bm{H}P^n \ar[rr]^-f & & \bm{H}P^\infty_{(p)} 
}
\]
where $p$ is a prime and $i$ and $j$ are the inclusions.
Let us consider the map
\[
S^4\times \bm{H}P^n\cup *\times \bm{H}P^{n+1}\stackrel{f\cup (({\rm localization})i)}{\to}\bm{H}P^\infty_{(p)}.
\]
The obstruction to extending this map over $S^4\times \bm{H}P^{n+1}$ lives in $\pi_{4n+7}(\bm{H}P^\infty_{(p)})$.
Then, from Theorem of \cite{Sel78}, the obstruction to extending the map
\[
S^4\times \bm{H}P^n\cup *\times \bm{H}P^{n+1}\stackrel{(p\times {\rm id})\cup {\rm id}}{\to}S^4\times \bm{H}P^n\cup *\times \bm{H}P^{n+1}\stackrel{f\cup (({\rm localization})i)}{\to}\bm{H}P^\infty_{(p)}.
\]
over $S^4\times \bm{H}P^{n+1}$ vanishes for an odd prime $p$.
Hence one can see $d_p(pk)>d_p(k)$ and $d_p(k)\geq v_p(k)$ inductively.
For $p=2$, from \cite{Jam57}, $d_2(4k)>d_2(k)$ and $d_2(k)\geq [v_2(k)/2]$ similarly.
Then we have
\[
v_p(k)\leq d_p(k)\leq \frac{p-1}{2}v_p(k)
\]
for an odd prime $p$ and
\[
\left[ \frac{v_2(k)}{2}\right]\leq d_2(k)\leq v_2(k)
\]
from previous two sections.
Especially, $d_3(k)=v_3(k)$.
\par
Now we give the lower bound of the number of $A_n$-types of gauge groups of principal $\SU(2)$-bundle over $S^4$.
As stated in \S 2, if $\mathcal{G}(P_k)$ and $\mathcal{G}(P_{k'})$ are $A_n$-equivalent, then $\min \{n,d_p(k)\}=\min \{n,d_p(k')\}$ for any prime $p$.
If $p$ is an odd prime, then
\[
\# \{ \,\min \{ n,d_p(k) \} \, |\, k\in \bm{Z} \, \}\geq \left[\frac{2n}{p-1}+1 \right]
\]
since $0=d_p(1)<d_p(p)<d_p(p^2)<\cdots <d_p(p^{[2n/(p-1)]})\leq n$.
If $p=2$, then
\[
\# \{ \,\min \{ n,d_2(k) \} \, |\, k\in \bm{Z} \, \}\geq \left[\frac{n}{2}+1 \right]
\]
since $0=d_2(1)<d_2(4)<d_2(16)<\cdots <d_2(4^{[n/2]})\leq n$.
\begin{thm}
The number of $A_n$-types of gauge groups of principal $\SU(2)$-bundles over $S^4$ is greater than
\[
\left[\frac{n}{2}+1 \right]\prod_{p:{\rm odd\, prime}}\left[\frac{2n}{p-1}+1 \right].
\]
\end{thm}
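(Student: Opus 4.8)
The plan is to produce pairwise non-$A_n$-equivalent gauge groups by exhibiting many distinct values of the invariant recalled above. Since $A_n$-equivalence of $\mathcal{G}(P_k)$ and $\mathcal{G}(P_{k'})$ forces $\min\{n,d_p(k)\}=\min\{n,d_p(k')\}$ for every prime $p$, the assignment
\[
k\longmapsto \bigl(\min\{n,d_p(k)\}\bigr)_{p\ \mathrm{prime}}
\]
factors through the set of $A_n$-equivalence classes. Hence the number of $A_n$-types is at least the number of distinct tuples occurring as $k$ ranges over $\bm{Z}$, and it suffices to count these tuples.

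First I would record that $d_p(k)$ depends only on $v_p(k)$. The extension problem defining $d_p(k)$ lives among maps into the $p$-local space $\bm{H}P^\infty_{(p)}$, so it depends only on the $p$-localization of the classifying map $k\colon S^4\to \bm{H}P^\infty$, i.e. on $k\in\pi_4(\bm{H}P^\infty_{(p)})=\bm{Z}_{(p)}$. Writing $k=u\,p^{v_p(k)}$ with $u\in\bm{Z}_{(p)}^\times$, the unit $u^{-1}$ is realized by a self-equivalence $\phi$ of $S^4_{(p)}$, and $\phi\times\mathrm{id}$ is a self-equivalence of $S^4_{(p)}\times(\bm{H}P^n)_{(p)}$ compatible with the inclusion of $S^4\vee\bm{H}P^n$ and fixing the $\bm{H}P^n$-factor. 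Precomposing with $\phi\times\mathrm{id}$ turns the $k$-problem into the $p^{v_p(k)}$-problem, whence $d_p(k)=d_p\bigl(p^{\,v_p(k)}\bigr)$; this is what makes the per-prime data independent.

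Next I would assemble the count. By the established chains, for $p=2$ the numbers $\min\{n,d_2(4^j)\}$ with $j=0,\dots,[n/2]$ are $[n/2+1]$ distinct values, and for an odd prime $p$ the numbers $\min\{n,d_p(p^j)\}$ with $j=0,\dots,[2n/(p-1)]$ are $[2n/(p-1)+1]$ distinct values; for $p>2n+1$ this guaranteed count is $1$, so only the finitely many primes $p\le 2n+1$ contribute a factor exceeding $1$. Given, for each such $p$, a choice among its values realized by an exponent $a_p$, I would take $k=\prod_{p\le 2n+1}p^{a_p}$; since $d_p(k)=d_p(p^{a_p})$ depends only on $v_p(k)=a_p$, the $p$-component of the tuple of $k$ equals the chosen value for $p\le 2n+1$ and equals $0$ for $p>2n+1$. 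Distinct choices yield distinct tuples, so the tuples arising from nonzero $k$ already number at least
\[
\left[\frac{n}{2}+1\right]\prod_{p:{\rm odd\, prime}}\left[\frac{2n}{p-1}+1\right].
\]

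To finish with a strict inequality I would add the trivial bundle $P_0$: since $d_p(0)=\infty$, its tuple is $(n,n,n,\dots)$, with $p$-component $n$ for every $p$, whereas any nonzero $k$ has finitely many prime divisors and so $p$-component $0$ for all but finitely many $p$. For $n\ge 1$ this tuple differs from all those above, contributing one further $A_n$-type. The only non-formal ingredient, and the step I expect to be the main obstacle, is the independence claim $d_p(k)=d_p(p^{v_p(k)})$; granting it, the rest is the bookkeeping of multiplying the independent per-prime counts and adjoining the trivial class.
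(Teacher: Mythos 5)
Your proof is correct and follows the same route as the paper: the invariance of $\min\{n,d_p(k)\}$ under $A_n$-equivalence combined with the chains $0=d_p(1)<d_p(p)<\cdots<d_p(p^{[2n/(p-1)]})\leq n$ and $0=d_2(1)<d_2(4)<\cdots<d_2(4^{[n/2]})\leq n$ established in \S 7. You additionally make explicit two points the paper leaves implicit --- that $d_p(k)$ depends only on $v_p(k)$ (which is what licenses multiplying the per-prime counts, and which you justify soundly by realizing a unit of $\bm{Z}_{(p)}$ as a self-equivalence of $S^4_{(p)}$) and that $k=0$, with $d_p(0)=\infty$, supplies the strict inequality.
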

We can express the logarithm of this as follows:
\begin{align*}
\log\left( \left[\frac{n}{2}+1 \right]\prod_{p:{\rm odd\, prime}}\left[\frac{2n}{p-1}+1 \right] \right)
=&\log \left[\frac{n}{2}+1 \right]+\sum_{p:{\rm odd\, prime}}\log \left[\frac{2n}{p-1}+1 \right]\\
=&\log \left[\frac{n}{2}+1 \right]+\sum_{r=2}^{n+1}\left(\pi\left(\frac{2n}{r-1}+1\right)-1\right)(\log r-\log (r-1))\\
=&\log \left[\frac{n}{2}+1 \right]+\sum_{r=1}^n\pi \left( \frac{2n}{r}+1\right)\log \left( 1+\frac{1}{r}\right)-\log (n+1),
\end{align*}
where $\pi$ is the prime counting function.
The second equality is seen by
\[
\# \left\{ p:{\rm an\, odd\, prime}\, \left| \, \left[\frac{2n}{p-1}+1 \right]\geq r\right. \right\}=\pi\left(\frac{2n}{r-1}+1\right)-1.
\]

\end{document}